       \newtheorem{thm}{Theorem}[section] %%the counter [section] is optional  
       \theoremstyle{definition} %%%%% switch to Roman bodyfont
\newcommand{\rp}{ \Bbb R_+}
\newcommand{\crp}{\overline{\Bbb R}_+}
\newcommand{\Tr}{\operatorname{Tr}}
\begin{document}

\title [The sectorial projection defined from logarithms]
{The sectorial projection \\ defined from logarithms}

\author {Gerd Grubb}

\address
{Department of Mathematical Sciences, Copenhagen University,
Universitetsparken 5, DK-2100 Copenhagen, Denmark.
E-mail {\tt grubb\@math.ku.dk}}
\begin{abstract}
For a classical elliptic pseudodifferential operator 
$P$ of order $m>0$ on a closed
manifold $X$, such that the eigenvalues of the principal symbol
$p_m(x,\xi )$ have arguments in $\,]\theta ,\varphi [\,$ and
$\,]\varphi ,\theta +2\pi [\,$ ($\theta <\varphi <\theta +2\pi $), the
sectorial projection $\Pi _{\theta ,\varphi }(P)$ is defined
essentially as the integral of the resolvent along $e^{i\varphi
}\crp\cup e^{i\theta }\crp$. In a recent paper, Booss-Bavnbek, Chen,
Lesch and Zhu have pointed out that there is a flaw in several
published proofs that $\Pi _{\theta ,\varphi }(P)$ is a $\psi $do of
order 0; namely that $p_m(x,\xi )$ cannot in general be modified to allow
integration of $(p_m(x,\xi )-\lambda )^{-1}$ along   $e^{i\varphi
}\crp\cup e^{i\theta }\crp$ simultaneously for all $\xi $. We show
that the structure of $\Pi _{\theta ,\varphi }(P)$ as a $\psi $do of
order 0 can be deduced from the formula $\Pi _{\theta ,\varphi }(P)=\tfrac i{2\pi }( \log_\theta \! P -
\log_\varphi \! P )$ proved in an earlier work
(coauthored with Gaarde). In the analysis of $\log_\theta P$ one need
only modify $p_m(x,\xi )$ in a neighborhood of $e^{i\theta }\crp$;
this is known to be possible from Seeley's 1967 work on complex powers.
\end{abstract}
\subjclass {35J48, 47A60, 58J40, 58J50}

\maketitle

\section {Functions of an elliptic operator}\label{sec1}
 
Let $P$ be a classical elliptic pseudodifferential operator ($\psi $do)
of order $m>0$ acting in an $N$-dimensional hermitian vector bundle
$E$ over  a closed $n$-dimensional $C^\infty $ 
manifold $X$.

The construction of functions of $P$ was initiated by  Seeley, who in
\cite{S67} constructed and analysed the {\bf complex powers}
$P^s$ and showed that they are likewise classical $\psi $do's, under
the assumption that $P$ has one ray of minimal growth \linebreak$\{\lambda
=re^{i\theta }\mid r\in{\Bbb R}_+\}$, 
where  $(P-\lambda )^{-1}$ is well-defined and
is $O(\lambda ^{-1})$ for $\lambda \to\infty $. They are useful in
index theory for elliptic operators, and its generalizations, see also Atiyah and Bott \cite{AB67}. 
Greiner \cite{G71} defined the {\bf heat operator} $e^{-tP}$, when all
rays with argument in $\,]\pi /2-\delta , 3\pi /2+\delta [\,$ are rays
of minimal growth (for some $\delta >0$); it is likewise used in index
theory. 
The {\bf sectorial projection} $\Pi _{\theta
,\varphi }(P)$ 
is defined when
$P$ has two rays of
minimal growth $e^{i\theta }{\Bbb R}_+$ and $e^{i\varphi }{\Bbb R}_+$
($ \theta <\varphi <\theta + 2\pi $), as a projection whose range includes the generalized eigenspaces for
eigenvalues with argument in $\,]\theta ,\varphi [\,$, and whose
nullspace
contains the generalized eigenspaces for
eigenvalues with argument in $\,]\varphi ,\theta +2\pi [\,$.
Burak \cite{B70} studied it for $P$ equal to a realization $A_B$ of an elliptic
differential operator $A$ with a boundary condition $Bu=0$. A
special case, the {\bf positive eigenprojection} $\Pi _+(P)$ for a
selfadjoint differential or pseudodifferential operator, came into
focus with the works of Atiyah, Patodi and Singer \cite{APS75, APS76}
on index formulas for Dirac operators with boundary conditions;
here $\Pi _+(P)$ equals $(P+|P|)(2|P|)^{-1}$ (defined to be zero on the
nullspace of $P$), and is a $\psi $do of order 0 since
$|P|=(P^2)^\frac12$ is classical elliptic of order $m$ by \cite{S67}. 
Wodzicki \cite{W82, W84, W87}, in his studies of the spectral asymmetry, considered $\Pi _+(P)$, as well as more
general sectorial projections in cases where
$P$ has two rays of
minimal growth.
The {\bf logarithm} $\log_\theta P$ is defined when $e^{i\theta }{\Bbb R}_+$
is a ray of minimal growth, and arises e.g.\ as the derivative of
$P^s$ at $s=0$; it was analysed in detail by Okikiolu \cite{O95a, O95b} in
connection with determinant formulas.

The  sectorial projection can be defined on smooth functions by the formula
\begin{equation}
\Pi_{\theta,\varphi}(P)  = \tfrac{i}{2\pi} \int_{\Gamma_{\theta,\varphi}}
\lambda^{-1} P\, (P-\lambda)^{-1}  \, d\lambda, 
\label{1.1}
\end{equation}
where the integration goes along the sectorial contour 
\begin{equation}
\Gamma_{\theta,\varphi} = \{ r e^{i \varphi} \mid \infty > r > r_0
\} \cup \{ r_0 e^{i\omega} \mid \varphi \ge \omega \ge \theta
\} \cup \{ r e^{i \theta} \mid r_0 < r < \infty \};
\label{1.2}
\end{equation}
here $r_0$ is taken so small that 0 is the only possible eigenvalue in
$\{|\lambda |\le r_0\}$. 

Detailed studies of $\Pi_{\theta,\varphi}(P)$ were also made by Wojciechowski
\cite{Woj85} for applications to the spectral flow for first-order operators, by Nazaikinskii-Sternin-Shatalov-Schulze
\cite{NSSS98} for manifolds with singularities, and by Ponge
\cite{P06} who wanted to give a simplified proof of the results of
Wodzicki.
A  recent paper of Booss-Bavnbek, Chen, Lesch and Zhu \cite{BCLZ11}
gives an interesting observation, namely that there
is a flaw in the arguments of the latter three papers, where the
construction of $\Pi _{\theta ,\varphi }(P)$ is based on an application
of \eqref{1.1} to the terms in the symbol of $(P-\lambda )^{-1}$: When the
principal symbol $p_m(x,\xi )$ has eigenvalues both with arguments in
$\,]\theta ,\varphi [\,$ and in $\,]\varphi ,\theta +2\pi [\,$, {\it one cannot
obtain that  
$(p_m(x,\xi )-\lambda )^{-1}$ is
nonsingular along the curve $\Gamma _{\theta ,\varphi }$ 
 for all $\xi \in {\Bbb R}^n$}; there is a topological obstruction (see the detailed explanation in \cite{BCLZ11}). Therefore
a modified proof is needed.

The reader is referred to the paper of Booss-Bavnbek et al.\ for their strategy to
circumvent the mentioned difficulty. They show that 
$\Pi _{\theta ,\varphi }(P)$ is $H^s$-bounded, when $P$
has a homogeneous principal symbol and a lower-order part in $S^{m-1}_{1,0}$.
They use their estimates to show that the norm of 
$\Pi _{\theta ,\varphi }(P)$ in $H^s$-spaces 
depends continuously on $P$ in a certain symbol/operator topology
coarser than the full symbol topology.

We shall here show, when $P$ is classical,  that a very easy
proof of the fact that $\Pi _{\theta ,\varphi }(P)$ is a classical
$\psi $do of order 0 (in particular $H^s$-bounded) comes from the relation between the sectorial 
projection and
logarithms of $P$, as worked out in detail in Gaarde-Grubb \cite{GG08}.

\section {Preliminaries on the
logarithm of $P$}\label{sec2}

The present author's interest in the logarithm stems from reading the
paper of Scott \cite{S05}, where it was shown that $C_0(P)=-\frac1m\operatorname{res}(\log P)$; here $C_0(P)=\zeta
(P,0)+\dim\ker P$, where $\zeta (P,s)$ is the meromorphic extension of
$\Tr P^{-s}$, and the residue of $\log P$ is as defined in
Okikiolu \cite{O95b}.
Since $C_0(P)$ is also equal to the coefficient 
of $-\lambda ^{-1}$ in the expansion of the resolvent trace $\Tr(P-\lambda
)^{-1}$ (take $m>n$ for simplicity in this motivating
explanation), this coefficient is related in the same way to
$\operatorname{res}(\log P)$. We
wanted to give a direct proof of the last fact 
without having to calculate complex powers --- for the sake of a
generalization to boundary value problems where complex powers are
difficult to use. The outcome is explained in \cite{G05}, where the
point of departure is  a
simple key lemma (Lemma 1.2) that shows how the logarithm comes into 
the resolvent
trace calculations. This was used to show Scott's formula directly
from resolvent trace expansions, and the method was generalized to get
similar results for manifolds with boundary.
Related observations were used to deduce the results in \cite{GG08}
that we appeal to in Section \ref{sec3} below.

Assume in this section that $P$ is elliptic of order $m\in{\Bbb R}_+$,
having  a ray of minimal growth $e^{i\theta }{\Bbb R}_+$ for some 
$\theta \in [0,2\pi [\,$. This means that the principal symbol
$p_m(x,\xi )$, homogeneous of degree $m$ in $\xi $ for $|\xi |\ge 1$
and smooth in $(x,\xi )$, has no eigenvalues on $e^{i\theta }{\Bbb
R}_+$ when $|\xi | \ge 1$. Then
$(P-\lambda )^{-1}$ exists and is $O(\lambda ^{-1})$ for large
$\lambda $ on the ray, and since the hypotheses are valid also for
rays with argument close to $\theta $, one can assume that the ray is free of eigenvalues of $P$.

The principal
symbol of the resolvent is $q_{-m}(x,\xi ,\lambda )=(p_m(x,\xi
)-\lambda )^{-1}$ for $|\xi |\ge 1$, assumed to be extended in a
smooth way for $|\xi |\le 1$. The smoothing can be done for each $\lambda $
e.g.\ by multiplication of $(p^h_m(x,\xi)-\lambda )^{-1}$ (where
$p^h_m(x,\xi )$ denotes the strictly homogeneous symbol) by an
excision function  $\zeta (\xi )$ (a nonnegative
$C^\infty $ function that equals
1 for $|\xi |\ge 1$, 0  near $\xi
=0$). In some cases it suffices to modify
$p_m(x,\xi )$ itself for small $\xi $. 

The lower order terms $q_{-m-j}$ in
the symbol $q(x,\xi ,\lambda )$ of $(P-\lambda )^{-1}$ are defined
in local coordinates by recursive formulas known from \cite{S67}; they
are finite sums of terms with the structure
\begin{equation} 
r(x,\xi ,\lambda )=b_1q_{-m}^{\nu _1}b_2q_{-m}^{\nu _2}\cdots b_Mq_{-m}^{\nu _M}b_{M+1},\label{2.1}
\end{equation}
where the $b_k$ are homogeneous $\psi $do symbols independent of
$\lambda $, the $\nu _k$ are positive integers with sum $\ge 2$. 
$(P-\lambda )^{-1}$ has the full symbol
$q(x,\xi ,\lambda )\sim \sum_{j\ge 0}q_{-m-j}(x,\xi ,\lambda )$. The
terms $q_{-m-j}$ are quasi-homogeneous (homogeneous of degree
$-m-j$ in $(\xi ,|\lambda |^{\frac1m})$ on each ray).

Now $\log _\theta P$ can be defined on smooth functions by
\begin{equation}
\log _\theta P=\lim _{s\searrow 0}\tfrac i{2\pi }\int_{\mathcal C}\lambda_\theta 
^{-s}\log_\theta  \lambda \,(P-\lambda )^{-1} \,d\lambda.\label{2.2}
\end{equation}
Here $\lambda _\theta ^{-s}$ and $\log _\theta \lambda $ are taken with branch cut
 $e^{i\theta }\rp$, and $\mathcal C$
is a contour in ${\Bbb C}\setminus e^{i\theta }\crp$ going
around the nonzero spectrum of $P$ in the positive direction; for
precision we can take a Laurent loop
\begin{equation}
\mathcal C_\theta=\{ re^{i\theta }\mid \infty >r>r_0\}\cup\{ r_0e^{i\omega
}\mid \theta \ge \omega \ge \theta - 2\pi 
\}\cup\{re^{i (\theta-2\pi) }\mid r_0<r<\infty \},
\label{2.3}
\end{equation}
with $r_0$ so small that 0 is the only possible eigenvalue in
$\{|\lambda |\le r_0\}$.
As shown in Okikiolu \cite{O95a}, the symbol of $\log_\theta 
P$ is calculated in local coordinates from the resolvent symbol $q(x,\xi ,\lambda )$
by integration with $\log _\theta \lambda $ around the
spectrum of the principal symbol $p_m$. The
terms $q_{-m-j}$ contribute as
follows:
\begin{equation}\aligned
\tfrac i{2\pi }&\int_{\mathcal C_\theta (x,\xi )}\log_\theta  \lambda \,q_{-m}(x,\xi
,\lambda ) \,d\lambda=
\tfrac i{2\pi }\int_{\mathcal C_\theta (x,\xi )}\log _\theta  \lambda \,(p_m(x,\xi
)-\lambda )^{-1} \,d\lambda\\&=\log _\theta p_m(x,\xi )
=\log _\theta  ([\xi ]^m)+\log_\theta ([\xi ]^{-m}p_m(x,\xi ))=m\log[\xi ]+l_{\theta ,0}(x,\xi ),\\
\tfrac i{2\pi }&\int_{\mathcal C_\theta (x,\xi )}\log _\theta \lambda \,q_{-m-j}(x,\xi
,\lambda ) \,d\lambda=
l_{\theta ,-j}(x,\xi )\text{ for }j>0,\\
\endaligned\label{2.4}\end{equation}
where $\mathcal C_\theta (x,\xi )$ is a closed curve in $\Bbb C\setminus e^{i\theta }\crp$
around the spectrum of $p_m(x,\xi )$, and $[\xi ]$ stands for a smooth
positive function on ${\Bbb R}^n$ equal to $|\xi |$ for $|\xi |\ge 1$. It is a point here that the
Laurent loop used for $\log_\theta P$ is replaced by a closed curve (by
replacement of the rays outside a large $R$ by an arc with radius $R$)
and $ \lambda ^{-s}$ is replaced by its limit 1, since the
spectrum is bounded at each $(x,\xi )$. If, more generally, $\theta =\theta _0+2\pi k$
with $\theta _0\in [0,2\pi [\,$ and $k$ integer, then $l_{\theta ,0}$
contains an additional constant $2\pi ik$.

Each $l_{\theta ,-j}$ is homogeneous in
$\xi $ of degree $-j$ for $|\xi |\ge 1$; for $j=0$ it follows since
$[\xi ]^{-m}p_m(x,\xi )$ is so, and for $j\ge 1$ it is seen e.g.\ as
follows (where we set $\lambda =t^m\varrho $, $t\ge 1$):
\begin{equation}\aligned
l_{\theta ,-j}(x,t\xi )&=\tfrac i{2\pi }\int_{\mathcal C_\theta (x,\xi )}\log _\theta \lambda \,q_{-m-j}(x,t\xi
,\lambda ) \,d\lambda\\
&= \tfrac i{2\pi }\int_{t^{-m}\mathcal C_\theta (x,\xi )}(\log _\theta 
\varrho+m\log t)t^{-m-j} \,q_{-m-j}(x,\xi
,\varrho  ) \,t^md\varrho \\
&= t^{-j}l_{\theta ,-j}(x,\xi )+mt^{-j}\log t\tfrac i{2\pi }\int_{t^{-m}\mathcal C_\theta (x,\xi )} \,q_{-m-j}(x,\xi
,\varrho  ) \,d\varrho ,
\endaligned\label{2.5}\end{equation} 
where the last term is zero since $q_{-m-j}$ is $O(|\varrho |^{-2})$
for $|\varrho |\to\infty $ when $j>0$.

In the proof that the full symbol of $\log_\theta  P$ is 
\begin{equation}
m\log[\xi ]+l_\theta (x,\xi ),\quad l_\theta (x,\xi )\sim {\sum}_{j\ge
0}l_{\theta ,-j}(x,\xi ),\label{2.6}
\end{equation}
one uses the observation by Seeley \cite{S67} that the symbol $p_m(x,\xi )$
can be modified smoothly near $\xi =0$ in such a
way that $p_m(x,\xi )-\lambda $ is invertible for all $\lambda $ in a
keyhole region $\{\lambda \in{\Bbb C}\mid \arg\lambda \in \,]\theta
-\delta  ,\theta +\delta  [\,\text{ or }|\lambda |<r\}$, all
$(x,\xi )$, when $r$ and $\delta  $ are sufficiently small positive numbers.

\section {The relation between the sectorial projection and
logarithms}\label{sec3}

For a general closed, densely defined operator $A$ in a Hilbert space $H$,
with compact resolvent and two rays $e^{i\theta }{\Bbb R}_+$ and
$e^{i\varphi }{\Bbb R}_+$ in the resolvent set, where \linebreak
$\|(A-\lambda )^{-1}\|$ is $O(\lambda ^{-1})$ for $\lambda \to\infty $
on the rays, one defines $\Pi_{\theta,\varphi}(A) $ on $D(A)$, to
begin with, by
\begin{equation}
\Pi_{\theta,\varphi}(A) x = \tfrac{i}{2\pi} \int_{\Gamma_{\theta,\varphi}}
\lambda^{-1} A\, (A-\lambda)^{-1} \, x \, d\lambda, \quad x\in D(A);
\label{3.1}
\end{equation}
here the integration goes along the sectorial contour \eqref{1.2}. 
If the hereby defined operator $\Pi_{\theta,\varphi}(A)$ is bounded in
$H$-norm, we extend it by continuity to $H$.

Similarly, if $A$ has compact resolvent and one ray $e^{i\theta }{\Bbb
R}_+$
 in the resolvent set, where 
$\|(A-\lambda )^{-1}\|$ is $O(\lambda ^{-1})$ for $\lambda \to\infty
$ on the ray, one can define $\log_\theta A$ by the formula
\begin{equation}
\log _\theta Ax=\lim _{s\searrow 0}\tfrac i{2\pi }\int_{\mathcal C}\lambda_\theta 
^{-s}\log_\theta  \lambda \,(A-\lambda )^{-1}\,x \,d\lambda, \quad
x\in D(A).
\label{3.2}
\end{equation}

The results in the following theorem were shown in \cite{GG08} (Lemma
4.3 and Prop. 4.4).

\begin{thm}\label{Theorem 3.1}

$1^\circ$ Let $f(\lambda)$ be a continuous (possibly vector-valued) function on
the ``punctuated double keyhole region'' 
\begin{equation}
V_{r_0,\delta} = \{ \lambda \in \Bbb C \mid |\lambda| < 2r_0 \text{ or
} |\arg \lambda - \theta| < \delta \text{ or } |\arg\lambda -
\varphi|< \delta \} \setminus \{ 0 \},
\label{3.3}
\end{equation}
such that $f(\lambda)$ is $O(\lambda^{-1-\varepsilon})$ for $|\lambda|
\to \infty$ in $V_{r_0,\delta}$. Then
\begin{equation}
\int_{\mathcal C_{\theta}} \log_\theta \! \lambda \,
f(\lambda) \,d\lambda - \int_{\mathcal C_{\varphi}} \log_\varphi \! \lambda \,
f(\lambda)\, d\lambda = -2\pi i \int_{\Gamma_{\theta,\varphi}} f(\lambda)
\,d\lambda.
\label{3.4}
\end{equation}

$2^\circ$
For $x \in D(A)$,
\begin{equation}
\log_\theta \! A \, x - \log_\varphi \! A \, x =
\int_{\Gamma_{\theta,\varphi}} \lambda^{-1} A (A-\lambda)^{-1} \, x \,
d\lambda = -2\pi i \; \Pi_{\theta,\varphi}(A) \, x.
\label{3.5}
\end{equation}
When $\Pi_{\theta,\varphi}(A)$ is bounded, so is
$\log_\theta \! A  - \log_\varphi \! A  $ (and vice versa), and then 
\begin{equation}
\Pi_{\theta,\varphi}(A)=\tfrac i{2\pi }( \log_\theta \! A -
\log_\varphi \! A ).
\label{3.6}
\end{equation}
\end{thm}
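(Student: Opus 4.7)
For part $1^\circ$, the plan is a direct parametric computation. Decompose $\mathcal C_\theta$ (and similarly $\mathcal C_\varphi$) into an inward ray on the upper side of its branch cut, a full clockwise arc of radius $r_0$, and an outward ray on the lower side. The jump $\log_\theta\lambda|_{\mathrm{upper}} - \log_\theta\lambda|_{\mathrm{lower}} = \pm 2\pi i$ at the $\theta$-cut (and similarly at the $\varphi$-cut) collapses the paired ray integrals of each Laurent loop to a ray integral of $f$ alone (the $\log r$ parts cancelling), giving in total $-2\pi i$ times the integral of $f$ along the ray pieces of $\Gamma_{\theta,\varphi}$. For the arcs, view both $A_\theta$ and $A_\varphi$ as tracing the same geometric circle once clockwise; since $\log_\theta\lambda - \log_\varphi\lambda$ equals $-2\pi i$ for $\arg\lambda\in(\theta,\varphi)$ and $0$ for $\arg\lambda\in(\varphi,\theta+2\pi)$, the arc difference collapses to $-2\pi i$ times the integral of $f$ along the short arc of $\Gamma_{\theta,\varphi}$. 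Adding the two contributions gives \eqref{3.4}.

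For part $2^\circ$, the second equality in \eqref{3.5} is by definition of $\Pi_{\theta,\varphi}(A)$, and \eqref{3.6} follows from the first two equalities by continuous extension from $D(A)$ to $H$ once boundedness is known. The first equality is the substantive claim, and the plan is to reduce it to part $1^\circ$ by regularization in $s$. Set
\begin{equation*}
G_\theta(s) = \int_{\mathcal C_\theta}\lambda_\theta^{-s}(A-\lambda)^{-1}x\,d\lambda,\qquad G_\varphi(s) = \int_{\mathcal C_\varphi}\lambda_\varphi^{-s}(A-\lambda)^{-1}x\,d\lambda.
\end{equation*}
Using $\lambda^{-s}\log\lambda = -\partial_s\lambda^{-s}$, one has $\log_\theta\! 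A\,x - \log_\varphi\! A\,x = -\partial_s|_{s=0^+}\tfrac{i}{2\pi}[G_\theta(s) - G_\varphi(s)]$. The ray/arc decomposition of part $1^\circ$ applies again with $\lambda^{-s}$ replacing $\log\lambda$: the ratio of $\lambda_\theta^{-s}$ across its cut is $e^{2\pi is}$, on the short arc $\lambda_\theta^{-s} = e^{2\pi is}\lambda_\varphi^{-s}$, and on the long arc the two branches agree, giving
\begin{equation*}
G_\theta(s) - G_\varphi(s) = (e^{2\pi is} - 1)\int_{\Gamma_{\theta,\varphi}}\lambda^{-s}(A-\lambda)^{-1}x\,d\lambda.
\end{equation*}

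For $x\in D(A)$, the resolvent identity $(A-\lambda)^{-1}x = \lambda^{-1}[(A-\lambda)^{-1}Ax - x]$ combined with $\int_{\Gamma_{\theta,\varphi}}\lambda^{-s-1}\,d\lambda = 0$ (by Cauchy, since $\Gamma_{\theta,\varphi}$ does not enclose $0$) rewrites the $\Gamma$-integral as $J(s) := \int_{\Gamma_{\theta,\varphi}}\lambda^{-s-1}A(A-\lambda)^{-1}x\,d\lambda$. As $A(A-\lambda)^{-1}x = x + \lambda(A-\lambda)^{-1}x$ is uniformly $O(1)$, the integrand is uniformly $O(|\lambda|^{-2})$ for $s\in[0,1]$, so $J(s) \to J(0) = \int_{\Gamma_{\theta,\varphi}}\lambda^{-1}A(A-\lambda)^{-1}x\,d\lambda$ as $s\to 0^+$. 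Differentiating $(e^{2\pi is} - 1)J(s)$ in $s$ at $s=0$, the $J'$-term is killed by $(e^{2\pi is}-1)|_{s=0} = 0$, so $\partial_s[(e^{2\pi is}-1)J(s)]|_{s=0} = 2\pi i\,J(0)$. Consequently $\log_\theta\! A\,x - \log_\varphi\! A\,x = -\tfrac{i}{2\pi}\cdot 2\pi i\,J(0) = J(0) = -2\pi i\,\Pi_{\theta,\varphi}(A)\,x$, as required. The main obstacle is the careful sign and orientation bookkeeping in the $\lambda^{-s}$-analog of part $1^\circ$, and justifying by dominated convergence that $\partial_s$ commutes both with the $\Gamma$-integral and with the limit $s \to 0^+$.
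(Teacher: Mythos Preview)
The paper does not actually prove Theorem~\ref{Theorem 3.1}; it quotes the result from \cite{GG08} (Lemma~4.3 and Prop.~4.4) and then uses it. So there is no in-paper argument to compare against, only the cited source.

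Your argument is sound and self-contained. Part~$1^\circ$ is a straightforward parametric computation, and you have the geometry of the two Laurent loops and the branch jumps right. For Part~$2^\circ$, note that you do not really reduce to Part~$1^\circ$; rather you rerun the same ray/arc bookkeeping with $\lambda^{-s}$ in place of $\log\lambda$, obtaining the factorization $G_\theta(s)-G_\varphi(s)=(e^{2\pi is}-1)J(s)$, and then differentiate at $s=0^+$. This is perfectly legitimate and is in fact the natural companion of Part~$1^\circ$ (the two are linked by $-\partial_s\lambda^{-s}=\lambda^{-s}\log\lambda$). Two small points of precision: (i) the claim $\int_{\Gamma_{\theta,\varphi}}\lambda^{-s-1}\,d\lambda=0$ is not literally ``by Cauchy'', since $\Gamma_{\theta,\varphi}$ is an open contour; the clean justification is that $-\tfrac1s\lambda^{-s}$ is a single-valued primitive on a simply connected neighbourhood of $\Gamma_{\theta,\varphi}$ and vanishes at both infinite endpoints for $\operatorname{Re}s>0$; (ii) to kill the $(e^{2\pi is}-1)J'(s)$ term you need $J'(s)$ bounded as $s\to 0^+$, which follows from the uniform $O(|\lambda|^{-2}\log|\lambda|)$ bound on the integrand of $J'$ --- worth stating, since you correctly flag this commutation of limit and derivative as the delicate step.
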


We now assume that $P$ is elliptic of order $m\in{\Bbb R}_+$, having  two rays of minimal growth $e^{i\theta }{\Bbb R}_+$ and $e^{i\varphi }{\Bbb R}_+$
(with $\theta \in [0,2\pi [\,$, $\theta <\varphi <\varphi + 2\pi
$). Then
$(P-\lambda )^{-1}$ exists and is $O(\lambda ^{-1})$ for large
$\lambda $ on the rays, and we can
assume that the rays are free of eigenvalues of $P$.
The considerations in Theorem 3.1 will be applied to $P$, 
entering as a closed unbounded
operator in $H=L_2(X)$ with domain $D(P)=H^m(X)$.

\begin{thm}\label{Theorem 3.2} $\Pi _{\theta ,\varphi }(P)$ equals $\tfrac i{2\pi }( \log_\theta \! P -
\log_\varphi \! P )$, and is a classical $\psi $do of
order $\le 0$. It has the symbol
\begin{equation}
\pi _{\theta ,\varphi }(x,\xi )=\tfrac i{2\pi }(l_{\theta }(x,\xi
)-l_{\varphi }(x,\xi )),\label{3.7} 
\end{equation}
in local coordinates.
\end{thm}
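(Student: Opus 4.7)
The plan is to combine the symbolic analysis of the single-branch logarithm developed in Section~\ref{sec2} with the abstract identity of Theorem~\ref{Theorem 3.1}, thereby bypassing the topological obstruction of \cite{BCLZ11}. The essential point is that the construction of each logarithm $\log_\theta P$ or $\log_\varphi P$ only requires Seeley's one-ray keyhole modification of $p_m(x,\xi)$ near $e^{i\theta}\crp$ (respectively near $e^{i\varphi}\crp$), and such a modification is unobstructed for a \emph{single} ray.

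First I would invoke Section~\ref{sec2} twice, once for each ray. This gives that both $\log_\theta P$ and $\log_\varphi P$ are well-defined log-polyhomogeneous operators with full symbols
\[
m\log[\xi] + l_\theta(x,\xi) \quad\text{and}\quad m\log[\xi] + l_\varphi(x,\xi)
\]
in local coordinates, where $l_\theta\sim\sum_{j\ge 0}l_{\theta,-j}$ and $l_\varphi\sim\sum_{j\ge 0}l_{\varphi,-j}$ are classical symbols of order $0$ with terms given by \eqref{2.4}.

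Next I would subtract and observe that the non-classical contributions $m\log[\xi]$ cancel exactly: they arise from $\log_\theta([\xi]^m) = m\log[\xi]$, and the possible branch-dependent shift (a constant $2\pi i k$ as noted after \eqref{2.4}) has already been absorbed into $l_{\theta,0}$ and $l_{\varphi,0}$. Consequently $\log_\theta P - \log_\varphi P$ is a classical $\psi$do of order $\le 0$ with full symbol $l_\theta(x,\xi) - l_\varphi(x,\xi)$ in local coordinates. In particular, by the standard $L_2$-boundedness of classical $\psi$do's of order $0$, this difference extends to a bounded operator on $H = L_2(X)$.

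With boundedness in hand, I would apply Theorem~\ref{Theorem 3.1}~$2^\circ$ to $A = P$ acting in $L_2(X)$ with domain $H^m(X)$. The boundedness of $\log_\theta P - \log_\varphi P$ triggers the ``vice versa'' clause, so $\Pi_{\theta,\varphi}(P)$ is bounded and \eqref{3.6} applies, yielding $\Pi_{\theta,\varphi}(P) = \tfrac{i}{2\pi}(\log_\theta P - \log_\varphi P)$; reading off symbols gives \eqref{3.7}. There is no serious obstacle: the heavy work lies upstream in Section~\ref{sec2} and in the abstract identity from \cite{GG08}. The only delicate point to verify is the exact cancellation of the $m\log[\xi]$ terms, which is what upgrades the difference from log-polyhomogeneous to genuinely classical and hence yields the stated order-$0$ classical $\psi$do structure.
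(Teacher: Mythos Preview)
Your proposal is correct and follows essentially the same route as the paper's own proof: invoke the symbol formula \eqref{2.6} for each logarithm, cancel the common $m\log[\xi]$ term to obtain a classical order-$0$ $\psi$do, and then feed its $L_2$-boundedness into Theorem~\ref{Theorem 3.1}~$2^\circ$ to identify $\Pi_{\theta,\varphi}(P)$ with $\tfrac{i}{2\pi}(\log_\theta P-\log_\varphi P)$ and read off the symbol. Your added remarks on why only a single-ray keyhole modification is needed and on the absorption of branch constants into $l_{\theta,0}$ are accurate and helpful but not required for the argument.
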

\begin{proof} As recalled in Section \ref{sec2}, 
$\log_\theta P$ and $\log_\varphi P$ are log-polyhomogeneous $\psi
$do's with symbols as described in \eqref{2.6}. Then 
$ \log_\theta \! P-
\log_\varphi \! P $ has the symbol
\begin{equation}
l_{\theta }(x,\xi )-l_{\varphi }(x,\xi ),\label{3.8}
\end{equation}
where the log-terms $m\log[\xi ]$ have cancelled out. Hence it is a
classical $\psi $do of order $\le 0$; in particular it is bounded on
$L_2(X)$. By Theorem \ref{Theorem 3.1} $2^\circ$, $\Pi_{\theta,\varphi}(P)$ is then also
bounded on $L_2(X)$; and it equals $\tfrac i{2\pi }( \log_\theta \! P -
\log_\varphi \! P )$ and is a classical
$\psi $do on $X$ of order $\le 0$ whose symbol is found as the corresponding
linear combination of the symbols of the logarithms,
namely \eqref{3.7}.\qed
\end{proof}

We can also show that the terms in the symbol of $\Pi _{\theta
,\varphi }(P)$ have the expected form as integrals of terms in the
resolvent symbol:

\begin{thm}\label{Theorem 3.3} In local coordinates, the symbol $\pi _{\theta ,\varphi }(x,\xi )\sim\sum_{j\ge 0}\pi
_{\theta ,\varphi ,-j}(x,\xi )$ of $\Pi _{\theta ,\varphi }(P)$
satisfies, 
 for each $x$, each $|\xi |\ge 1$: 
\begin{equation}
\pi _{\theta ,\varphi ,-j}(x,\xi )=\tfrac i{2\pi }(l_{\theta ,-j}(x,\xi )-l_{\varphi ,-j}(x,\xi ))=  \tfrac{i}{2\pi} \int_{\mathcal
C_{\theta,\varphi}(x,\xi)} q_{-m-j}(x,\xi,\lambda) \, d\lambda,\label{3.9}
\end{equation}
for all $j$. Here $\mathcal C_{\theta,\varphi}(x,\xi)$ is a closed curve
in the open sector
$\Lambda_{\theta,\varphi}=$ \linebreak $\{\lambda \in {\Bbb C}\mid \theta <\arg
\lambda <\varphi \}$ going in the positive direction around the
spectrum of $p_m(x,\xi)$ lying in that sector. 
\end{thm}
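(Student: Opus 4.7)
My plan is to prove the two equalities in \eqref{3.9} in turn.

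For the first equality, I would invoke Theorem~\ref{Theorem 3.2}, which already identifies the full symbol of $\Pi_{\theta,\varphi}(P)$ as $\tfrac{i}{2\pi}(l_\theta(x,\xi) - l_\varphi(x,\xi))$. Since $l_\theta\sim\sum_{j\geq 0} l_{\theta,-j}$ and $l_\varphi\sim\sum_{j\geq 0} l_{\varphi,-j}$ are asymptotic expansions into pieces homogeneous of degree $-j$ in $\xi$ for $|\xi|\geq 1$, and likewise $\pi_{\theta,\varphi}\sim\sum_{j\geq 0}\pi_{\theta,\varphi,-j}$, uniqueness of homogeneous asymptotic expansions forces $\pi_{\theta,\varphi,-j} = \tfrac{i}{2\pi}(l_{\theta,-j} - l_{\varphi,-j})$ term by term.

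For the second equality, I would fix $(x,\xi)$ with $|\xi|\geq 1$ and reduce the computation to a residue calculation in $\lambda$. By ellipticity, $p_m(x,\xi)$ has only finitely many (nonzero) eigenvalues, and each $q_{-m-j}(x,\xi,\lambda)$ is a matrix-valued meromorphic function of $\lambda$ whose only poles are at those eigenvalues. Substituting the integral representations from \eqref{2.4} (noting that in the $j=0$ case the $m\log[\xi]$ contributions cancel upon subtraction) gives
\begin{equation*}
l_{\theta,-j} - l_{\varphi,-j} = \tfrac{i}{2\pi}\Bigl[\int_{\mathcal C_\theta(x,\xi)} \log_\theta\lambda\, q_{-m-j}\,d\lambda - \int_{\mathcal C_\varphi(x,\xi)} \log_\varphi\lambda\, q_{-m-j}\,d\lambda\Bigr].
\end{equation*}
Each closed curve encircles the entire spectrum of $p_m(x,\xi)$, so the residue theorem converts each integral to a sum of residues at all eigenvalues $\mu$. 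In the difference the combined residue integrand carries the factor $\log_\theta\lambda - \log_\varphi\lambda$, which is locally constant on $\Bbb C\setminus(e^{i\theta}\crp\cup e^{i\varphi}\crp)$: it equals $-2\pi i$ on $\Lambda_{\theta,\varphi}$ and $0$ on the complementary sector $\Lambda_{\varphi,\theta+2\pi}$. Hence only eigenvalues $\mu\in\Lambda_{\theta,\varphi}$ survive, and
\begin{equation*}
l_{\theta,-j} - l_{\varphi,-j} = 2\pi i\sum_{\mu\in\Lambda_{\theta,\varphi}}\operatorname{Res}_{\lambda=\mu} q_{-m-j}(x,\xi,\lambda) = \int_{\mathcal C_{\theta,\varphi}(x,\xi)} q_{-m-j}(x,\xi,\lambda)\,d\lambda,
\end{equation*}
the last step being the residue theorem for a closed curve around just the sector eigenvalues. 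Multiplying by $\tfrac{i}{2\pi}$ yields the second equality.

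The main subtlety I would have to handle carefully is that $\mathcal C_\theta(x,\xi)$ and $\mathcal C_\varphi(x,\xi)$ lie in different punctured planes (avoiding $e^{i\theta}\crp$ and $e^{i\varphi}\crp$ respectively), so they cannot in general be taken to coincide as curves. This is harmless for the residue calculation, which depends only on the local behavior of the integrands near each eigenvalue and the winding number (equal to $1$ in both cases, since each curve encircles the full spectrum). Importantly, this uniform approach covers the case $j=0$ as well, for which Theorem~\ref{Theorem 3.1}~$1^\circ$ cannot be invoked directly on $f=q_{-m}$ because of the slow $O(\lambda^{-1})$ decay of $(p_m(x,\xi)-\lambda)^{-1}$.
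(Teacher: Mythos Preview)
Your argument is correct. The first equality matches the paper's reasoning exactly. For the second equality you take a genuinely different route: the paper splits into two cases, invoking Theorem~\ref{Theorem 3.1}~$2^\circ$ for $j=0$ (applied to the finite-dimensional operator $p_m(x,\xi)$ on ${\Bbb C}^N$, followed by the algebraic identity $\lambda^{-1}p_m(p_m-\lambda)^{-1}=\lambda^{-1}+(p_m-\lambda)^{-1}$ and reduction to a closed curve) and Theorem~\ref{Theorem 3.1}~$1^\circ$ for $j\geq 1$ (where the $O(\lambda^{-2})$ decay hypothesis from the structure \eqref{2.1} is available). Your direct residue computation, exploiting that $\log_\theta\lambda-\log_\varphi\lambda$ is piecewise constant off the two rays and that each $q_{-m-j}(x,\xi,\cdot)$ is rational in $\lambda$ with poles only at the eigenvalues of $p_m(x,\xi)$, bypasses Theorem~\ref{Theorem 3.1} entirely and treats all $j$ uniformly --- in particular it sidesteps the decay issue at $j=0$ that you rightly flag. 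Your approach is more elementary and self-contained; the paper's has the virtue of recycling the operator-theoretic identity \eqref{3.5} at the symbol level, so the calculation becomes a quick corollary rather than a fresh residue argument.
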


\begin{proof} The first equality follows immediately from \eqref{3.7}.
For the second equality we use \eqref{2.4}. For $j=0$,
  we obtain the formula by applying Theorem \ref{Theorem 3.1} $2^\circ$ 
to the bounded operator
$p_m(x,\xi )$ in  ${\Bbb C}^N$:
\begin{align*}
l_{\theta ,0}-l_{\varphi ,0}&=\log_\theta p_m-\log_\varphi
p_m=\int_{\Gamma_{\theta,\varphi}} \lambda^{-1} p_m (p_m-\lambda)^{-1}
\,d\lambda \\
&=\int_{\mathcal C} \lambda^{-1} p_m (p_m-\lambda)^{-1}
\,d\lambda 
=\int_{\mathcal C} (\lambda^{-1} + (p_m-\lambda)^{-1}
) \,d\lambda \\
&=\int_{\mathcal C} (p_m-\lambda)^{-1} \,d\lambda ,
\end{align*}
where the curve $\Gamma _{\theta
,\varphi }$ could be replaced by a closed curve $\mathcal C=\mathcal C_{\theta
,\varphi }(x,\xi )$ in $\Lambda _{\theta ,\varphi }$,
since the integrand was $O(\lambda ^{-2})$ and the spectrum of
$p_m(x,\xi )$ 
in 
$\Lambda _{\theta ,\varphi } $ is a finite set of points.
 For $j\ge 1$, we obtain the formula by application of 
Theorem \ref{Theorem 3.1} $1^\circ$ with
 $f(\lambda )$ equal to the $j$'th term $q_{-m-j}(x,\xi ,\lambda )$ in
 the resolvent symbol (recall the structure as a sum of terms \eqref{2.1});
it is $O(\lambda ^{-2})$, allowing reduction to a closed curve $\mathcal C$.   
\qed
\end{proof}

In relation to the problem raised in \cite{BCLZ11}, we note that the
calculations in Theorem \ref{Theorem 3.3} take place at individual points $(x,\xi
)$, where there is no problem with singularities on the curve $\Gamma
_{\theta ,\varphi }$. Calculations global in $\xi $ are only
performed in the constructions of the logarithms, where the argument
of Seeley \cite{S67} is valid.

In \cite{GG08} we relied on the account of Ponge \cite{P06} referring
to five works of Wodzicki from the 80's (two in Russian), for the knowledge that $\Pi _{\theta ,\varphi }(P)$
is a zero-order $\psi $do. Although Ponge's own formulation of a
proof has the flaw pointed out in \cite{BCLZ11}, we see no reason to
doubt the original statement, which is further supported by the formula $ P_{\theta}^s-P_{\varphi}^s=(1-e^{2i\pi s}) \Pi _{\theta,\varphi}(P) P_{\theta}^s
$ in \cite{P06} Sect.\ 4, ascribed to Wodzicki.

 At any rate, it seems to be
useful that the present paper gives an independent proof which avoids the
mentioned pitfall, and is based directly on resolvents and logarithms.

The formula $\Pi _{\theta ,\varphi }(P)=\tfrac i{2\pi }( \log_\theta \! P -
\log_\varphi \! P )$  allows a direct application of the results in Okikiolu
\cite{O95a}, Sect.\ 4, to show that the norm in $H^s$-spaces 
depends continuously and even smoothly on the symbol of $P$, in
dependence on a parameter $t$ in an open subset $T$ of ${\Bbb R}^d$.
 As mentioned earlier,
\cite{BCLZ11} shows the continuity in terms of a certain
symbol/operator topology on $P$.


\begin{thebibliography}{[1]}

\bibitem{AB67}
M. F. Atiyah and R. Bott: \emph{A Lefschetz fixed point formula for
  elliptic complexes. I}. Ann. of Math. (2) \textbf{86} (1967)
374--407.


\bibitem{APS75}  M. F. Atiyah, V. K.  Patodi and I. M.  Singer:
\emph{Spectral asymmetry and Riemannian geometry, I, II}.
Math. Proc. Cambridge Philos. Soc. \textbf{77} (1975)  43--69,
 \textbf{78} (1975)  405--432. 

\bibitem{APS76}  M. F. Atiyah, V. K.  Patodi and I. M.  Singer:
\emph{Spectral asymmetry and Riemannian geometry. III.}
Math. Proc. Cambridge Philos. Soc. \textbf{79} (1976)  71--99. 


\bibitem{BCLZ11}  B. Booss-Bavnbek, G. Chen, M. Lesch and C. Zhu:
\emph{Perturbation of sectorial projections of elliptic
pseudo-differential operators},  arXiv:1101.0067.

\bibitem{B70}
  T. Burak:
\emph{On spectral projections of elliptic operators.} 
Ann. Scuola Normale Superiore Pisa (3) \textbf{24} (1970)
  209--230.

\bibitem{GG08}  A. Gaarde and G. Grubb: \emph{Logarithms and
sectorial projections for elliptic boundary problems}.
Math. Scand. \textbf{103} (2008)  243--277.


\bibitem{G71}  P. Greiner: \emph{An asymptotic expansion for the heat
equation.} Arch. Rational Mech. Anal. \textbf{41} (1971) 
163--218.

\bibitem{G05}
  G. Grubb:
\emph{On the logarithm component in trace defect formulas.} 
Comm. Part. Diff. Equ.
\textbf{30}
(2005)  1671--1716.

\bibitem{NSSS98}  V. Nazaikinskii, B. Sternin, V. Shatalov and
B.-W. Schulze: \emph{Spectral boundary value problems and elliptic
equations on manifolds with singularities.} Diff. Equ.
\textbf{34} (1998)  695--708.

\bibitem{O95a}
  K. Okikiolu:
\emph{The Campbell-Hausdorff theorem for elliptic operators and a
related trace formula.}  Duke Math. J. \textbf{79} (1995) 
687--722. 

\bibitem{O95b}
  K. Okikiolu:
\emph{The multiplicative anomaly for determinants of elliptic
operators.} Duke Math. J.
\textbf{79} (1995)
  723--750.

\bibitem{P06} 
  R. Ponge:
\emph{Spectral asymmetry, zeta functions, and the noncommutative
residue.} Int. J. Math.
\textbf{17} (2006)
 1065--1090.

\bibitem{S05}
  S. Scott:
\emph{The residue determinant.} Comm. Part. Diff. Equ. 
\textbf{30} (2005)  483--507.


\bibitem{S67}
R. T. Seeley:
\emph{Complex powers of an elliptic operator.}
AMS Proc. Symp. Pure Math. X, 1966
 Amer. Math. Soc.,
Providence R. I.
 (1967)
  288--307.

\bibitem{W82}
  M. Wodzicki:
\emph{Spectral asymmetry and zeta functions.} Invent. Math.
\textbf{66} (1982)
 115--135.

\bibitem{W84}
  M. Wodzicki:
\emph{Local invariants of spectral asymmetry.} Invent. Math.
\textbf{75} (1984)
  143-178.

\bibitem{W87} 
  M. Wodzicki:
\emph{Noncommutative residue. I. Fundamentals}. K-theory,
Arithmetic and Geometry, Moscow, 1984--1986.
Lecture Notes in Math., 1289, Springer, Berlin
 (1987)  320--399.

\bibitem{Woj85}  K. Wojciechowski: \emph{Spectral flow and the general
linear conjugation problem.} Simon Stevin \textbf{59} (1985) 59--91.

\end{thebibliography}
\end{document}